\newtheorem{theorem}{Theorem}[section]
\newtheorem{lemma}[theorem]{Lemma}
\newtheorem{fact}[theorem]{Fact}
\newtheorem{notation}[theorem]{Notation}
\newtheorem{problem}[theorem]{Problem}
\newcommand{\GL}[0]{{\operatorname{GL}}}  
\newcommand{\gl}[0]{{\mathfrak{gl}}}  
\newcommand{\Aff}[0]{{\operatorname{Aff}}}  
\newcommand{\aff}[0]{{\mathfrak{aff}}}  
\newcommand{\PGL}[0]{{\operatorname{PGL}}}  
\newcommand{\pgl}[0]{{\mathfrak{pgl}}}
\renewcommand{\sl}[0]{{\mathfrak{sl}}}
\newcommand{\BZ}[0]{{\mathbb Z}}  
\newcommand{\BQ}[0]{{\mathbb Q}}  
\newcommand{\BR}[0]{{\mathbb R}}
\newcommand{\CC}[0]{{\mathcal C}}
\date{}
\title{Classification of maps sending lines into translates of a curve}
\author
{J\'{o}zsef Solymosi
\thanks{Department of Mathematics,
University of British Columbia, 1984 Mathematics Road,
Vancouver, BC, V6T 1Z2, Canada, and Obuda University,
Budapest, Hungary.
Email: solymosi@math.ubc.ca}
\and 
Endre Szab\'o
\thanks{  
Alfr\'ed R\'enyi Institute of Mathematics,
	 Hungarian Academy of Sciences
	 H-1053 Budapest, Re\'altanoda u. 13--15.
Email: endre@renyi.hu}}
\begin{document}

\maketitle

\begin{abstract}
We list four types of planar curves such that arrangements of their translates are (locally) combinatorially
equivalent to an arrangement of lines. We find them by characterising diffeomorphisms $\phi:\BR^2\to\BR^2$ and continuous
  curves $C\subset\BR^2$ such that
  $\phi\big( t+C \big)$ is a line for all $t\in\BR^2$.
There are exactly five maps satisfying (at least locally) this condition. Two of them define the same curve, so we have four different curves.
These can be used to define norms giving constructions with $\Omega(n^{4/3})$ unit distances among $n$ points in the plane. 
\end{abstract}

\noindent
MSC: 52C30, 17B81, 52C10

\noindent
Keywords: Arrangements of lines and curves, Unit distances, Applications of Lie Algebras

\section{Introduction}

\noindent 
One of the oldest and best-known problems in combinatorial
geometry is Paul Erd\H os' unit distances problem \cite{ER}.
What is the maximum number of unit distances among $n$
points on the plane? Erd\H os conjectured that the maximum
number of unit distances is $n^{1+o(1)}.$ 
(Through the paper we are going to use the Big-O, Little-o, and Omega notations. A real function $f(x)$
is $o(T(x))$ if $f(x)/T(x)\rightarrow 0$ as $x\rightarrow \infty.$ It is $O(T(x))$ if there is a $c>0$ such that $f(x)/T(x)\leq c$ as $x\rightarrow \infty,$ and it 
is $\Omega(T(x))$ if there is a $c>0$ such that $f(x)/T(x)\geq c$ as $x\rightarrow \infty.$)

The conjecture is
still open, the best-known upper bound is $O(n^{4/3}).$ This bound was
proved by Spencer, Szemer\'edi, and Trotter \cite{SST}. It
seems that the exponent ${4/3}$ is the limit of the known
combinatorial methods, even to prove $o(n^{4/3})$ is out
of range of the known techniques.

\medskip
One reason behind this barrier is that there are
norms where one can find $n$-element point-sets with
$\Omega(n^{4/3})$ unit distances. The oldest construction
providing such a norm can probably be derived from Jarn\'ik's construction \cite{Ja}.
Jarn\'ik defined a sequence of centrally symmetric smooth closed convex curves, $U_m$ 
containing $\Omega(m^{2/3})$ lattice points of the $m\times m$ integer grid. 
Setting such a convex curve as the unit disk, there are $\Omega(m^{8/3})$
unit distances among the $(2m)^2$ points of the $2m\times 2m$ integer lattice. 

\noindent
For completeness, we include the simple argument. Let's denote the centre of $U_m$ by $o.$ 
$U_m$ is centrally symmetric, so (by possibly losing a multiplier of 2) we can assume that $o\in [m]\times [m].$
The set, 
\[
\left\{(U_m+o)+(i,j) | i,j\in [m]\right\}
\]
has at most $(2m)^2$ points and every translate of $o$ has at least $\Omega(m^{2/3})$ points (the corresponding translate of $U_m$) 
at unit distance. If we set $n=m^2$, there are $\Omega(n^{4/3})$ unit distances with this norm.

In this example, the norm changes with $n.$ A nice construction, with a uniform norm, 
was given by Valtr \cite{VA} using translates of a parabola and the $n\times n^2$ integer grid. 
(see the description of the construction on page 194 in \cite{Pach}) On the other hand, it was 
proved by Matousek that most norms, in the sense of Baire category, determine $O(n\log n\log\log n)$ unit distances 
among $n$ points \cite{Ma}. This bound was improved recently to $O(n\log n)$ by Alon et al. in \cite{A}.

\medskip
For any strictly convex norm, among $n$ points in the plane, there are $O(n^{4/3})$ unit distances. 
This claim can be proved using the crossing number inequality from \cite{Szek} in the same way as proving the
Szemer\'edi-Trotter theorem. This theorem gives a sharp upper bound  on the number of incidences, $I,$
between $N$ points and $M$ lines on the real plane, $\mathbb{R}^2.$

\begin{theorem}[Szemer\'edi-Trotter Theorem \cite{SZT}]\label{SZTR}
$$I(N,M)=O\left(N^{\frac{2}{3}}M^{\frac{2}{3}}+N+M\right).$$
\end{theorem}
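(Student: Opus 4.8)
The plan is to prove the bound with L\'aszl\'o Sz\'ekely's crossing-number method --- the same tool referred to above in connection with strictly convex norms --- rather than with the original cell-decomposition argument of Szemer\'edi and Trotter. The idea is to package the incidence data as a graph drawn in the plane, and then confront the Euler-formula lower bound on its crossing number with the trivial upper bound $\binom{M}{2}$ coming from the fact that two distinct lines meet in at most one point.

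First I would discard the lines containing at most one of the $N$ points; together they account for at most $M$ incidences, an amount hidden inside the $O(M)$ term, so from now on each of the at most $M$ surviving lines $\ell$ carries $k_\ell \ge 2$ points and $\sum_\ell k_\ell \ge I - M$. Then I would build a graph $G$ on the $N$ points by joining, along each surviving line, the consecutive incident points; this creates $e := \sum_\ell (k_\ell - 1) \ge I - 2M$ edges. Drawing every edge as the straight segment between its endpoints exhibits $G$ in the plane so that two segments lying on the same line never cross, while two segments lying on different lines cross at most once; hence $\operatorname{cr}(G) \le \binom{M}{2} < M^2/2$.

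Next I would feed this into the crossing-number inequality from \cite{Szek}: a simple graph on $n$ vertices with $e \ge 4n$ edges has $\operatorname{cr}(G) \ge e^3/(64 n^2)$. If $e \ge 4N$, then $(I-2M)^3/(64 N^2) \le e^3/(64 N^2) \le \operatorname{cr}(G) < M^2/2$, and rearranging gives $I - 2M = O(N^{2/3} M^{2/3})$, hence $I = O(N^{2/3} M^{2/3} + M)$. If instead $e < 4N$, then immediately $I \le e + 2M < 4N + 2M$. Combining the two cases yields $I = O(N^{2/3} M^{2/3} + N + M)$, which is the assertion.

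The one genuinely substantial ingredient, and the step I expect to be the main obstacle, is the crossing-number inequality itself. I would obtain it from the elementary estimate $\operatorname{cr}(G) \ge e - 3n + 6$ --- a consequence of Euler's formula, since deleting one edge for each crossing leaves a planar graph with at most $3n - 6$ edges --- by a probabilistic sampling trick: keep each vertex independently with probability $p = 4n/e \le 1$, apply the elementary estimate to the random induced subgraph and take expectations, using that a fixed crossing survives with probability $p^4$, and then optimize over $p$. Everything else --- the preliminary removal of under-incident lines and the dichotomy $e < 4N$ versus $e \ge 4N$ --- only ever costs an additive $O(N+M)$ and is routine.
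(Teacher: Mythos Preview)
The paper does not actually prove Theorem~\ref{SZTR}: it is quoted as a known result with a citation to \cite{SZT}, and immediately afterwards the paper remarks that ``an elegant proof of the above theorem was given by Sz\'ekely'' \cite{Szek}. Your proposal is precisely that Sz\'ekely argument, carried out correctly: the reduction to lines with at least two incident points, the consecutive-point graph, the bound $\operatorname{cr}(G)<\binom{M}{2}$ from the fact that two lines meet at most once, and the dichotomy against the crossing-number inequality all go through as you describe (the graph is automatically simple because two of the $N$ points lie on at most one common line). So your write-up is correct and is exactly the proof the paper points the reader to, though the paper itself supplies none.
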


An elegant proof of the above theorem was given by Sz\'ekely, who also showed how to use his proof method
to give the $O(n^{4/3})$ bound for unit distances \cite{Szek}. 

\medskip
There are known
arrangements of $n$ lines and $n$ points such that
\begin{equation}
I(n,n)=\Omega\left(n^{\frac{4}{3}}\right).
\end{equation}
Such arrangements were found by Erd\H os, Elekes \cite{El}, Sheffer and Silier \cite{ShSi} using lattice points, i.e. a Cartesian product structure.
Recently Guth and Silier gave sharp examples not based on the integer lattice \cite{GS}.

\medskip
If there were maps where the images of lines are translates of a single curve, $\cal{C}$, then one can map such point-line arrangements 
to point-curve arrangements. Using part of the curve as (part of) the unit circle, with this norm we have  $\Omega(n^{4/3})$ unit distances.
Such a map exists: the map 
\begin{equation}\label{map1}
    (x,y)\mapsto (x,y+x^2)
\end{equation}
sends the line $(t,at+b)$ to the $(t,t^2+at+b)=\left(t,(t+a/2)^2-a^2/4+b\right)$ curve, which is 
a translate of the parabola $y=x^2$. This map was used over finite fields by Pudl\'ak in \cite{PP}. Pudl\'ak noticed that this map gives a 
one-to-one correspondence between point-parabola incidences and point-line incidences. He used it to define the colouring of the edges of a complete bipartite graph
with three colours without creating large monochromatic complete subgraphs.
If we apply the map in (\ref{map1}) to Elekes' point-line arrangement in \cite{El}
we get a construction very similar to Valtr's \cite{VA}.

\medskip
Based on the above observations, it is a natural problem characterizing maps of the plane sending lines into translates of a single curve. As we will see
there are exactly four more such maps in addition to Pudl\'ak's map in (\ref{map1}). 
The first and the third maps in the list below result in the same curve, these are translates of the log (exp) curve as the images of lines.
The last curve, which is not listed here, is given by the 
real and imaginary parts of the complex logarithm function.
\begin{enumerate}
    \item  $$M:(x,y)\mapsto(x,\ln(y)).$$
For every line $y=ax+b$ with $a>0,$ the $x>-\frac{b}{a}$ part maps to $\left(x,\ln\left(x+\frac{b}{a}\right)+\ln({a})\right).$ This is a translate of the curve $y=\ln(x).$    
\item  $$M:(x,y)\mapsto \left(\ln(x),\ln\left(\frac{y}{x}\right)\right).$$
    If we use the notation $\ln(x)=X,$ then the image of the line $y=ax+b$ is the $$\left(X,\ln\left(1+\frac{b/a}{e^X}\right)+\ln(a)\right)=\left(X,\ln\left(1+{e^{-(X-\ln(b/a))}}\right)+\ln(a)\right) $$
    curve if $a>0$ and $b>0.$ This is the translate of the curve $y=\ln(1+e^{-x})$.
    \item  $$M:(x,y)\mapsto \left(\ln(x),\frac{y}{x}\right).$$
    As in the previous case, if we use the notation $\ln(x)=X,$ then the image of the line $y=ax+b$ is the $$\left(X,e^{-(X-\ln(b))}+a\right) $$
    curve if $a>0$ and $b>0.$ This is the translate of the curve $y=e^{-x}$, so it is similar to the first, $y=\ln(x),$ case.
    \item This is probably the most surprising map 
    \[ M:(x,y)\mapsto\left(\Re(\ln(1-x+iy)),\Im(\ln(1-x+iy))\right).\]
    If a generic line is given by the equations $x=t, y= at+b$, then its image after the map is 
\[
x(t)=\Re\left(\ln\left(t-\frac{1+ib}{1-ia}\right)+\ln(ia-1)\right), \]
\[y(t)=\Im\left(\ln\left(t-\frac{1+ib}{1-ia}\right)+\ln(ia-1)\right),
\]
the real and imaginary parts of a translate of the complex logarithm function.

\end{enumerate}

%**************************************************************

\section{Preliminaries}
\label{sec:preliminaries}

\begin{notation}
  The group of invertible $3\times3$ matrices and the Lie algebra of
  all $3\times3$ matrices will be denoted by  $\GL(3,\BR)$ and $\gl(3,\BR)$
  resp. The quotient group of $\GL(3,\BR)$ by the normal subgroup of
  scalar matrices is denoted by $\PGL(3,\BR)$,
  it is the group of projective linear transformations of the
  projective plane.
  The Lie algebra of $\PGL(3,\BR)$ is the quotient of $\gl(3,\BR)$ by
  the ideal of scalar matrices, we denote it with $\pgl(3,\BR)$.
  It is naturally isomorphic to $\sl(3,\BR)$,
  the Lie algebra of $3\times3$ matrices of trace $0$.
\end{notation}

\begin{notation} \label{notation:affine-group}
  We denote by $\Aff(2,\BR)\le\GL(3,\BR)$ the subgroup of
  all $3\times3$ matrices of the form
  $$
  \left(\begin{matrix}
      L&v\\
      0&1\\
    \end{matrix}\right)
  $$
  where $L$ is a $2\times2$ invertible matrix (linear transformation),
  and $v$ is a 2-dimensional column vector (translation).
  We denote by $\aff(2,\BR)\le\gl(3,\BR)$ the Lie subalgebra of
  matrices of the form
  $$
  \left(\begin{matrix}
      \Lambda&v\\
      0&0\\
    \end{matrix}\right)
  $$
  where $\Lambda$ is a $2\times2$ matrix,
  and $v$ is a 2-dimensional column vector.
  The quotient homomorphism $\GL(3,\BR)\to\PGL(3,\BR)$ maps
  $\Aff(2,\BR)$ isomorphically onto its image, and similarly,
  the quotient homomorphism $\gl(3,\BR)\to\pgl(3,\BR)$ maps
  $\aff(2,\BR)$ isomorphically onto its image. We shall often
  identify $\Aff(2,\BR)$ and $\aff(2,\BR)$ with these images.
  With this identification $\Aff(2,\BR)$ becomes the group of affine
  transformations of $\BR^2$, and $\aff(2,\BR)$ becomes its Lie
  algebra.
\end{notation}

The following is well-known.
\begin{fact} \label{line-preserving-maps}
  Let $\psi_0:\BR^2\to\BR^2$ be a diffeomorphism
  which maps all lines into lines.
  Then $\psi_0\in\Aff(2,\BR)$.
\end{fact}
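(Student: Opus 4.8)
The plan is to reduce the claim to the classical fundamental theorem of projective (affine) geometry, but since we are given a \emph{diffeomorphism} rather than merely a bijection, we can give a clean differential-geometric argument instead. First I would observe that it suffices to show that $\psi_0$ is affine, i.e. that its derivative $D\psi_0$ is constant; then integrating gives $\psi_0(p)=Lp+v$ with $L=D\psi_0$ invertible (as $\psi_0$ is a diffeomorphism) and $v$ a translation vector, which is exactly an element of $\Aff(2,\BR)$ under the identification of Notation~\ref{notation:affine-group}.

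Next I would extract the geometric consequence of the hypothesis. Fix a point $p\in\BR^2$ and a direction $u\in\BR^2\setminus\{0\}$, and consider the line $\ell=\{p+su:s\in\BR\}$. By hypothesis $\psi_0(\ell)$ is contained in a line, so the curve $s\mapsto\psi_0(p+su)$ has velocity $D\psi_0(p+su)\,u$ always proportional to a fixed vector; equivalently its acceleration is proportional to its velocity. Writing this out, for every $p$ and every direction $u$ the second directional derivative $D^2\psi_0(p)(u,u)$ lies in the span of the first directional derivative $D\psi_0(p)\,u$. Denoting $B_p(u)=D^2\psi_0(p)(u,u)\in\BR^2$ and $A_p=D\psi_0(p)$, this says $B_p(u)=\lambda(p,u)\,A_p u$ for a scalar $\lambda(p,u)$; since $B_p$ is a quadratic form in $u$ and $A_p$ is linear and invertible, $\lambda(p,u)$ is a ratio of a quadratic and a nonvanishing linear form, hence $\lambda(p,u)=\langle c(p),u\rangle/\mu$ for some covector — more carefully, comparing homogeneity degrees forces $\lambda(p,u)$ to be \emph{linear} in $u$, say $\lambda(p,u)=\langle c(p),u\rangle$ for some $c(p)\in(\BR^2)^\ast$.

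Then I would polarize and exploit symmetry. The bilinear map $(u,w)\mapsto D^2\psi_0(p)(u,w)$ is symmetric, while the polarization of $u\mapsto\langle c(p),u\rangle A_p u$ is $\tfrac12\big(\langle c(p),u\rangle A_p w+\langle c(p),w\rangle A_p u\big)$, which is visibly symmetric, so no contradiction yet — but evaluating on a basis and using that $A_p$ is invertible, the relation $D^2\psi_0(p)(u,u)=\langle c(p),u\rangle A_p u$ for all $u$ is a genuine constraint. The standard move is to differentiate once more and use equality of mixed third partials: differentiating $\partial_u\partial_u\psi_0=\langle c,u\rangle\,\partial_u\psi_0$ in a second direction and symmetrizing the third-order terms yields an algebraic identity in $A_p$, $c(p)$ and $Dc(p)$ that forces $c(p)\equiv 0$. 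Once $c\equiv 0$ we get $D^2\psi_0(p)(u,u)=0$ for all $u$ and all $p$, hence $D^2\psi_0\equiv0$, hence $A_p=D\psi_0$ is constant, finishing the proof.

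The main obstacle I anticipate is the bookkeeping in this last differentiation step: one must be careful that the scalar $\lambda(p,u)$ is a genuine function (continuity/differentiability follows from that of $\psi_0$ and invertibility of $D\psi_0$, but this should be checked), and that the homogeneity argument pinning $\lambda$ down to a linear form in $u$ is airtight (a priori $\lambda$ could fail to extend across directions $u$ where $A_pu$ and $B_p(u)$ are both controlled). A cleaner alternative, which I would fall back on if the computation gets unwieldy, is the following: the hypothesis says $\psi_0$ maps the pencil of all lines to (a subfamily of) the pencil of all lines; passing to the projective completion $\BR P^2$ and using that a diffeomorphism of $\BR^2$ carrying lines to lines must carry the associated $2$-dimensional family of lines to itself, one invokes the fundamental theorem of affine geometry (a bijection of $\BR^2$, $\dim\ge2$, sending collinear triples to collinear triples is affine) — this is the ``well-known'' content being cited, and the differential argument above is really just a self-contained proof of it in the smooth category.
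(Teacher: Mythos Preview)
Your differential-geometric approach is natural, and everything through the identification of the linear form $c(p)$ is correct. The genuine gap is step~4: the symmetry of third partials does \emph{not} yield an identity forcing $c(p)=0$. Polarizing gives $\psi^\alpha_{,ij}=\tfrac12\big(c_i\,\psi^\alpha_{,j}+c_j\,\psi^\alpha_{,i}\big)$; differentiating once more and antisymmetrizing, one obtains only the Riccati-type system $\partial_i c_j=\tfrac12\,c_ic_j$, which has plenty of nonzero solutions. Indeed it must, because any projective linear map restricted to an affine chart sends lines to lines yet is not affine --- for instance $\psi(x,y)=\big(\tfrac{x}{1-x},\tfrac{y}{1-x}\big)$ satisfies your second-order relation with $c(x,y)=\big(\tfrac{2}{1-x},0\big)\neq0$. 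A purely local third-derivative argument therefore cannot separate affine from projective, which is exactly why the paper's local statement (\Cref{locally-line-preserving-maps}) lands in $\PGL(3,\BR)$ rather than $\Aff(2,\BR)$.

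Your strategy is salvageable, but the global hypothesis must enter here rather than being invisible: along each coordinate line the equation $\partial_i c_i=\tfrac12\,c_i^2$ is a scalar Riccati ODE whose nonzero solutions blow up in finite time, so a $c$ defined on all of $\BR^2$ must vanish identically, after which $D^2\psi_0\equiv0$ as you wanted. Your fallback to the fundamental theorem of affine geometry is of course correct and is precisely what the paper means by ``well-known''; the paper's own detailed argument (given for the local version in \Cref{locally-line-preserving-maps}, and described there as the standard proof of \Cref{line-preserving-maps}) takes the classical synthetic route via preservation of cross-ratios, quite different from your infinitesimal approach.
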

We need the following local version of this.
\begin{lemma} \label{locally-line-preserving-maps}
  Let $\psi:U\to V$ be a diffeomorphism
  between connected open subsets $U,V\subseteq\BR^2$
  which maps all line segments in $U$ into line segments in $V$.
  Then $\psi$ can be uniquely extended into a projective linear map
  $\tilde\psi\in\PGL(3,\BR)$.
\end{lemma}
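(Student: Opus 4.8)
The plan is to reduce \Cref{locally-line-preserving-maps} to a local assertion about maps that fix a projective frame, prove that assertion by a short derivative computation, and then glue the local data together using that $U$ is connected. Throughout I view $\BR^2\subseteq\mathbb{P}^2=\mathbb{P}(\BR^3)$ so that elements of $\PGL(3,\BR)$ are genuine diffeomorphisms of $\mathbb{P}^2$; treating a projective transformation as a partial self-map of $\BR^2$ would create spurious trouble with ``lines at infinity'' during the patching, so this point has to be handled consciously. I would first record two soft facts valid whenever $\psi$ is restricted to a convex open $B\subseteq U$. \emph{(i)} If a line $\ell$ meets $B$ then $\ell\cap B$ is a segment and $\psi(\ell\cap B)$ lies on a unique line $\ell^\psi$ (cover $\ell\cap B$ by overlapping closed subsegments; their images lie on lines forced to coincide by injectivity of $\psi$). \emph{(ii)} $\psi$ cannot map two segments of $B$ with different supporting lines into a common line $m$: by convexity of $B$ the triangle they span would have its whole boundary, hence (by a Jordan-type argument) a $2$-dimensional subregion, mapped into $m$, contradicting that $\psi$ is a local diffeomorphism. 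In particular, if $e_1,\dots,e_4\in B$ are in general position then so are $\psi(e_1),\dots,\psi(e_4)$.

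\textbf{The local claim.} \emph{Every $p\in U$ has a convex neighbourhood $B$ on which $\psi$ coincides with some $g\in\PGL(3,\BR)$.} Fix a small ball $B\ni p$ inside $U$, pick $e_1,\dots,e_4\in B$ in general position, and let $g\in\PGL(3,\BR)$ be the unique projective transformation with $g(e_i)=\psi(e_i)$ for all $i$ (it exists and is unique by the previous paragraph). Then $\phi:=g^{-1}\circ\psi$ is a local diffeomorphism of $B$ sending line segments into line segments and fixing $e_1,e_2,e_3,e_4$. The key point is now purely differential: since $\phi(e_1)=e_1$, the map induced by $\phi$ on the pencil of lines through $e_1$ carries the line with direction $[v]$ to the line with direction $[D\phi_{e_1}(v)]$; identifying that pencil with $\mathbb{P}^1$ via direction, this induced map is the projectivisation of the invertible linear map $D\phi_{e_1}$, hence an element of $\PGL(2,\BR)$. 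It fixes the three distinct lines $\overline{e_1e_2},\overline{e_1e_3},\overline{e_1e_4}$, and a non-identity element of $\PGL(2,\BR)$ fixes at most two points of $\mathbb{P}^1$; therefore it is the identity, i.e.\ $\phi$ maps every line through $e_1$ into itself. Likewise $\phi$ maps every line through $e_2$ into itself. Hence for every $x\in B$ off the line $\overline{e_1e_2}$ we get $\phi(x)\in\overline{e_1x}\cap\overline{e_2x}=\{x\}$, so $\phi(x)=x$; by continuity $\phi=\mathrm{id}_B$, i.e.\ $\psi=g$ on $B$.

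\textbf{Globalisation and uniqueness.} The local claim gives, for each $p\in U$, a neighbourhood $W_p\subseteq U$ and $g_p\in\PGL(3,\BR)$ with $\psi|_{W_p}=g_p|_{W_p}$. If $W_p\cap W_q\neq\emptyset$ then $g_p$ and $g_q$ agree on a non-empty open set, so $g_p=g_q$, because a projective transformation is determined by its restriction to any non-empty open set (such a set contains a projective frame). Thus $p\mapsto g_p$ is locally constant, hence constant since $U$ is connected; its common value $\tilde\psi\in\PGL(3,\BR)$ satisfies $\psi=\tilde\psi$ on $\bigcup_p W_p=U$, and any two such extensions agree on the open set $U$, hence coincide.

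\textbf{Where the difficulty lies.} This is a local form of the classical fact that collineations are projective, but the classical von Staudt proof is of no help: it builds the needed identifications by straightedge constructions that wander all over the plane, whereas the ``straightedge closure'' of a projective frame inside a small disc can be a \emph{finite} set, so no density argument confined to $U$ can work. The genuine content is therefore the use of differentiability, and it is concentrated entirely in the one observation above — that the action of $\phi$ on the pencil of lines through a fixed point is the projectivisation of $D\phi$ there; everything else is bookkeeping and the soft topology of connected open sets.
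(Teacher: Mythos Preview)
Your argument is correct, but it follows a genuinely different route from the paper's. The paper runs a localised von~Staudt argument: for each segment $S\subset U$ it lets $\CC_S$ be the set of cross-ratio values preserved by $\psi$, shows $-1\in\CC_S$ via a complete quadrangle drawn inside a small convex neighbourhood of $S$, and then uses the closure rules $\lambda\mapsto 1-\lambda$ and $(\lambda,\mu)\mapsto\mu/\lambda$ to obtain $\BZ\subseteq\CC_S$, $\BQ\subseteq\CC_S$, and finally $\CC_S=\BR$ by continuity; cross-ratio preservation on every convex open piece then gives the local projective extension, and connectedness glues. Your proof instead exploits differentiability in an essential way: after normalising by a projective frame, the induced self-map of the pencil through a fixed point is the projectivisation of the derivative there, hence lies in $\PGL(2,\BR)$, and three fixed lines force it to be the identity. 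This is shorter and more conceptual, but it genuinely needs $\psi$ to be $C^1$, whereas the paper's cross-ratio argument uses only continuity (so would go through for a line-preserving homeomorphism). Incidentally, your final remark that ``no density argument confined to $U$ can work'' is too pessimistic: the paper's quadrangle construction for the harmonic value $-1$ is entirely local, and it is the arithmetic bootstrapping on $\CC_S$ together with its closedness---not any density of straightedge-constructible points---that carries the day.
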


\begin{proof}
  The standard proof of \Cref{line-preserving-maps} works here if one is
  careful enough. We recall it for the sake of completeness.
  
  Let $S\subset U$ be a line segment.
  We define the following set of real numbers.
  \begin{equation*}
    \label{eq:1}
    \CC_S=\left\{ \lambda\in\BR\;\Bigg|\;
      {\text{If }A,B,C,D\in S\text{ \ with cross-ratio }
        (A,B:C,D)=\lambda
        \atop
        \text{then }\big(\psi(A),\psi(B);\psi(C),\psi(D)\big)=\lambda.
      }\right\}
  \end{equation*}
  We make several observations.
  \begin{enumerate}
  \item \label{item:1}
    If $\lambda\in\CC_S$ then $1-\lambda\in\CC_S$.\\
    Indeed, $(A,B:C,D) = 1-(A,C:B,D)$.
  \item \label{item:2}
    If $\lambda<0<\mu$ and $\lambda,\mu\in\CC_S$ then
    $\frac\mu\lambda\in\CC_S$.\\
    Indeed, let $(A,B:C,D)=\frac\mu\lambda$.
    Since this is negative, one of $C$ and $D$ lies inside
    $\overline{AB}$, the other lies outside.
    If $C\in\overline{AB}$ then we relabel $A,B,C,D$ to $B,A,D,C$,
    this does not change their  cross-ratio.
    So we can assume that $C\notin\overline{AB}$.
    Then there is a unique $E\in\overline{AB}$ with
    $(A,B:E,C))=\lambda$, and an easy calculation shows that
    $(A,B:E,D)=\mu$.
    This implies that
    $\big(\psi(A),\psi(B);\psi(E),\psi(C)\big)=\lambda$
    and $\big(\psi(A),\psi(B);\psi(E),\psi(D)\big)=\mu$,
    hence
    $\big(\psi(A),\psi(B);\psi(C),\psi(D)\big)=\frac\mu\lambda$.
  \item \label{item:3}
    $0,1\in\CC_S$.\\
    Indeed, if $(A,B:C,D)=0$ then either $A=C$ or $D=B$.
    This implies that either $\psi(A)=\psi(C)$ or $\psi(D)=\psi(B)$,
    hence $\big(\psi(A),\psi(B);\psi(C),\psi(D)\big)=0$.
    Therefore $0\in\CC_S$, and then (\ref{item:1}) implies $1\in\CC_S$.
  \item \label{item:4}
    $-1\in\CC_S$. \\
    Indeed, let $W\subseteq U$ be a convex neighborhood of
    $S$. If $(A,B:C,D)=-1$ then there is a complete quadrangle in $W$ which
    justifies this, i.e. $A$ and $C$ are the intersection points of
    the opposite sides, and the diagonals intersect the $AB$ line at
    $C$ and $D$. Then $\psi$ maps this quadrangle to a quadrangle in
    $V$ justifying that
    $\big(\psi(A),\psi(B);\psi(C),\psi(D)\big)=\lambda$.
  \item \label{item:5}
    $\BZ\subseteq\CC_S$.\\
    Indeed, starting with $-1\in\CC_S$, and applying (\ref{item:1})
    and (\ref{item:2}) alternately, we obtain that
    $2,-2,3,-3,4,-4,\dots\in\CC_S$. 
    %By (\ref{item:1}), $1$ and $2$ are also in $\CC_S$.
  \item
    $\BQ\subseteq\CC_S$.\\
    Indeed, negative rational numbers are in $\CC_S$
    by (\ref{item:5}) and (\ref{item:2}).
    Then by (\ref{item:1}), the nonnegative rational numbers also
    belong to $\CC_S$.
  \item
    $\CC_S=\BR$.\\
    Indeed, by the continuity of the cross-ratio, $\CC_S$ is a closed set.
  \end{enumerate}
  Now let $W\subseteq U$ be a convex open subset.
  The above observations imply that $\psi$ preserves all cross-ratios
  in $W$, hence there is a unique projective linear map
  $\overline\psi_W\in\PGL(3,\BR)$ which agrees with $\psi$ on $W$. 
  For overlapping convex open sets, the corresponding projective linear
  maps must be equal. Since $U$ is connected, all these
  $\overline\psi_W$ must be equal.
  This proves the lemma.
\end{proof}

\begin{lemma} \label{commuting-matrices-aonjugated-into-aff}
  Let $A,B\in\gl(3,\BR)$ be matrices whose images
  $\overline A,\overline B\in\pgl(3,\BR)$ commute.
  Then, after a suitable base change,
  $\overline A,\overline B\in\aff(2,\BR)$.
\end{lemma}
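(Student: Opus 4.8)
The plan is to reduce the problem to a statement about common eigenspaces (flags) in $\BR^3$ and then exhibit an explicit base change sending the relevant flag to the standard one. Recall that $\aff(2,\BR)$, viewed inside $\pgl(3,\BR)$, is precisely the stabilizer of a point of the projective plane together with a line through it — concretely, the matrices of the given block form stabilize the line at infinity $\{x_3=0\}$. (In fact stabilizing the line at infinity alone is enough: a $3\times3$ matrix whose last row is proportional to $(0,0,1)$ is, modulo scalars, of the form in \Cref{notation:affine-group}.) So it suffices to find a line $\ell\subset\BR^3$ (equivalently a line in the dual, i.e. a two-dimensional subspace) that is invariant under both $A$ and $B$; after a base change placing that subspace at $\{x_3=0\}$, both $\overline A$ and $\overline B$ land in $\aff(2,\BR)$.

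First I would pass from the commuting condition in $\pgl(3,\BR)$ back to $\gl(3,\BR)$: if $\overline A,\overline B$ commute then $[A,B]=\mu I$ for some scalar $\mu$, and taking traces forces $\mu=0$, so $A$ and $B$ actually commute as matrices. Now I invoke the linear-algebra fact that two commuting operators on a finite-dimensional complex vector space have a common eigenvector. Applying this to the complexification $\BC^3$, I get a common eigenvector $v$; I must then produce a \emph{real} invariant subspace of dimension $1$ or $2$. If the eigenvalues of $A$ and $B$ on $v$ are both real, the real and imaginary parts of $v$ span an $A$- and $B$-invariant real subspace of dimension $1$ or $2$; if some eigenvalue is non-real, then $v$ and $\bar v$ span a $2$-dimensional invariant subspace whose real points form a $2$-dimensional real invariant subspace. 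Either way I obtain a proper nonzero real subspace $W\subseteq\BR^3$ invariant under both $A$ and $B$. If $\dim W=2$ I take $\ell=W$; if $\dim W=1$, I instead run the same argument on the transposes $A^{\top},B^{\top}$ (which still commute) to get a $1$- or $2$-dimensional common invariant subspace, whose annihilator is a $2$- or $1$-dimensional common invariant subspace for $A,B$ — iterating at most once more produces a $2$-dimensional common invariant subspace in every case, since if both $A,B$ share a $1$-dimensional invariant subspace and $A^\top,B^\top$ share a $1$-dimensional invariant subspace that are not "dual" to each other we can still extract a $2$-dimensional flag member. (Cleanest: a common $1$-dimensional invariant subspace of $A^\top,B^\top$ is the same as a common invariant hyperplane of $A,B$, and a hyperplane in $\BR^3$ is $2$-dimensional — done.)

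Concretely then: apply the common-eigenvector fact to $A^\top$ and $B^\top$ over $\BC$, extract as above a real common invariant subspace $W^*\subseteq(\BR^3)^\ast$ of dimension $1$ or $2$; its annihilator $H=W^{*\perp}\subseteq\BR^3$ is a common invariant subspace of $A$ and $B$ of dimension $2$ or $1$. If $\dim H=2$, pick a basis of $\BR^3$ whose first two vectors span $H$; the change-of-basis matrix $P$ then conjugates $A$ and $B$ into block-upper-triangular form with last row $(0,0,\ast)$, i.e. into $\aff(2,\BR)$ modulo scalars. If instead $\dim H=1$, repeat the argument one level up (the induced action on $\BR^3/H$ is a pair of commuting operators on a $2$-dimensional space, which always have a common $1$-dimensional invariant subspace, whose preimage is the desired $2$-dimensional $H'\supseteq H$), and finish as before.

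The main obstacle is purely the reality issue: the existence of a common eigenvector is automatic over $\BC$ but need not give a line over $\BR$, so the one genuinely careful step is the case analysis producing a real invariant subspace of the right dimension — and the slick way around it is to work with hyperplanes (equivalently, with the transposed action) so that "dimension $2$" is forced. Everything after that is a routine change of basis, and identifying the stabilizer of $\{x_3=0\}$ in $\pgl(3,\BR)$ with $\aff(2,\BR)$ is exactly \Cref{notation:affine-group}.
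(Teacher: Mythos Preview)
Your reduction is the same as the paper's: pass from commuting in $\pgl(3,\BR)$ to commuting in $\gl(3,\BR)$ via the trace argument, then find a common $2$-dimensional real invariant subspace and move it to $\{x_3=0\}$. The paper carries out the last step by a direct case analysis on the eigenvalue pattern of $A$ (complex pair, three distinct reals, a repeated real, or $A$ scalar and switch to $B$); your route via common eigenvectors and duality with the transposes is more conceptual and perfectly reasonable.

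There is, however, a genuine gap in your fallback step. The assertion ``the induced action on $\BR^3/H$ is a pair of commuting operators on a $2$-dimensional space, which always have a common $1$-dimensional invariant subspace'' is false over $\BR$: a rotation and the identity commute on $\BR^2$ but share no real invariant line. Worse, this is exactly the situation your argument produces. You land in the case $\dim H=1$ precisely when the common eigenvector $v$ of $A^\top,B^\top$ had a non-real eigenvalue $\lambda$; then $W^*\otimes\BC$ is spanned by the $\lambda$- and $\bar\lambda$-eigenvectors, so $A^\top|_{W^*}$ has no real eigenvector, and dually neither does the induced action of $A$ on $\BR^3/H$. So the quotient step cannot rescue you here.

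The fix is short and stays within your framework. If $A^\top$ (equivalently $A$) has a non-real eigenvalue $\lambda$, then its three eigenvalues $\lambda,\bar\lambda,\mu$ are distinct with $\mu\in\BR$; the $\mu$-eigenspace of $A^\top$ is a real line, and since $B^\top$ commutes with $A^\top$ and preserves each of its (one-dimensional) eigenspaces, this line is $B^\top$-invariant as well. Its annihilator is the desired common $2$-plane. If instead all eigenvalues of $A^\top$ and $B^\top$ are real, then the generalized eigenspaces are real, and the standard common-eigenvector argument already produces a real common eigenvector for $A^\top,B^\top$, hence again a common invariant $2$-plane for $A,B$. Either way you avoid the quotient entirely; this is essentially the paper's case split, rephrased through your transpose viewpoint.
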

\begin{proof}
  Commutators have trace zero.
  So $[A,B]$ is a scalar matrix with trace 0,
  hence $A$ and $B$ commute in $\gl(3,\BR)$.
  We distinguish three cases.
  \begin{itemize}
  \item
    If $A$ has one real and two conjugate complex eigenvalues,
    then let $V$ be the real part of the linear span of the complex
    eigenspaces corresponding to the non-real eigenvalues of $A$.
  \item
    If $A$ has three different real eigenvalues, then let $V$ be the
    linear span of any two of the corresponding eigenspaces.
  \item
    If $A$ has only two different real eigenvalues, then let $V$ be the
    eigenspace corresponding to the eigenvalue with multiplicity two.
  \item
    Otherwise, $A$ has a single real eigenvalue of multiplicity three,
    hence $A$ is a scalar matrix. In this case, we switch the role of $A$
    and $B$, and go through this list again.
    If $B$ is not a scalar matrix then we obtain our $V$.
  \item
    Finally, if both $A$ and $B$ are scalar matrices, then let $V$ be an
    arbitrary plane in $\BR^3$.
  \end{itemize}
  In all cases, $V$ is a plane in $\BR^3$ invariant under both $A$ and
  $B$. After an appropriate base change, $V$ will be the hyperplane
  of vectors whose last coordinate is zero.
  Matrices that map this $V$ into itself
  are of the form
  $$
  \left(\begin{matrix}
      *&*&*\\
      *&*&*\\
      0&0&*
    \end{matrix}\right)
  $$
  where elements marked with $*$ are arbitrary,
  and the quotient homomorphism $\gl(3,\BR)\to\pgl(3,\BR)$
  maps such matrices into $\aff(2,\BR)$.
  Hence, after this base change,
  $\overline A,\overline B\in\aff(2,\BR)$.
\end{proof}

\section{Affine Structures}

\begin{notation}
  For a plane curve $C\subset\BR^2$ and a vector $t\in\BR^2$
  we denote by $t+C$
  the translate of $C$ with $t$.
\end{notation}

\begin{problem} \label{problem:phi0}
  Characterise diffeomorphisms $\phi_0:\BR^2\to\BR^2$ and continuous
  curves $C\subset\BR^2$ such that
  $\phi_0\big( t+C \big)$ is a line for all $t\in\BR^2$.
\end{problem}

\begin{problem} \label{problem:phi}
  Characterize continuous curves $C\subset\BR^2$ and
  diffeomorphisms $\phi:U\to V$
  between connected, open subsets $U,V\subseteq\BR^2$
  such that
  $\phi\big( (t+C) \cap U\big)$ is contained in a line for all $t\in\BR^2$.
\end{problem}

By composing $\phi_0$ on the left with an appropriate translation,
one can reduce Problem~\ref{problem:phi0} to the case when
$\phi_0(0,0)=(0,0)$.
Similarly,
by composing $\phi$ on both sides with appropriate translations
one can reduce Problem~\ref{problem:phi} to the case when
$(0,0)\in U$ and $\phi(0,0)=(0,0).$

Let $T\cong\BR^2$ denote the group of translations of $\BR^2$.
If we conjugate $T$ with any of these $\phi_0$
then by \Cref{line-preserving-maps}
we arrive at a subgroup of $\Aff(2,\BR)$.
Since $T$ is connected, these subgroups are uniquely determined by
their Lie algebras,
which are 2-dimensional commutative subalgebras of $\aff(2,\BR)$.

In the more general setup,
if we conjugate a small neighbourhood of the identity in $T$ with
any of these $\phi$
then by \Cref{locally-line-preserving-maps}
we arrive at a small neighbourhood of the identity in a connected subgroup
of $\PGL(2,\BR)$.
Again, these subgroups are uniquely determined by their Lie algebras,
which are, in this case, 2-dimensional commutative subalgebras of
$\pgl(2,\BR)$.
By \Cref{commuting-matrices-aonjugated-into-aff} these subalgebras,
after a base change, become subalgebras of $\aff(2,\BR)$.

So in both problems, we need to classify 2-dimensional
commutative subalgebras of $\aff(2,\BR)$, and analyze whether the
corresponding connected subgroups are isomorphic to $T$ or not.

The embedding of a Lie algebra $\mathfrak{g}$ into $\aff(2,\BR)$
is also called an \emph{affine structure on $\mathfrak{g}$}.
Such an embedding gives rise to an affine action on $\BR^2$
of the simply connected Lie group with Lie algebra $\mathfrak{g}$,
which we call the \emph{corresponding action}.
In particular, a 2-dimensional abelian subalgebra of $\aff(2,\BR)$
is called an affine structure on the 2-dimensional abelian Lie
algebra, and the corresponding action is an affine action of $T$
on $\BR^2$.

Affine structures on the 2-dimensional abelian Lie algebra were analyzed in the work of Rem and Goze \cite{RG}
where they proved that there are six affinely non-equivalent affine structures. They listed the affine structures on the 2-dimensional Lie
algebra and the corresponding action. Based on their list, there are
six actions we have to check for a potential $\phi$ or  $\phi_0$.

\subsection{The six affine actions}

In what follows, we are checking the affine actions listed in \cite{RG}
whether they give rise to a solution of Problem~\ref{problem:phi0} or
Problem~\ref{problem:phi}.
By definition, these affine actions are homomorphisms from
the group of translations $T\cong\BR^2$ into $\Aff(2,\BR)$.
Let $A(s,t)\in\Aff(2,\BR)$ denote the homomorphic image
of the element $(s,t)\in T$.
Recall from Notation~\ref{notation:affine-group}
that $A(s,t)$ is a $3\times3$ matrix of certain special form.

For every affine action,
we are looking for a diffeomorphism $\phi:U\to V$
between connected open neighbourhoods $U,V$ of the origin in $\BR^2$
such that $\phi(0,0)=(0,0)$ and
\begin{equation}\label{action}
\phi^{-1}(\phi(x,y)+(s,t))=A^*(s,t)\cdot[x,y,1]^T,
\end{equation}
where $A^*(s,t)$ is the $2\times 3$ submatrix of $A(s,t)$ containing
the first two rows.
We will not specify $U$ and $V$, only their existence is important to us.
However, by studying  the formulas we have for $\phi$ the reader can easily
find appropriate $U$ and $V$.

\medskip
Let us denote the translation by $(s,t)$ as $T(s,t) : \BR^2 \rightarrow \BR^2$    and let \\ $a(s,t) : \BR^2 \rightarrow \BR^2$  denote the affine transformation as defined above with the $A(s,t)$ matrix.   With this notation equation (\ref{action}) becomes 
\[
\phi^{-1}  \circ  T(s,t)  \circ  \phi  =  a(s,t)
\] 
where $\circ$ denotes the function composition. After rearranging it, we have 
\[
\phi^{-1}  \circ  T(s,t)   =  a(s,t) \circ  \phi^{-1}.
\] 
Applying both sides to $(x,y)=(0,0)$  we obtain
\[
  \phi^{-1}(s,t) = a(s,t)(0,0)=A^*(s,t)\cdot[0,0,1]^T,
\]
so we can recover $\phi$ from
the first two entries in the last column of $A(s,t)$.

In the first two cases our $\phi$ is actually an $\BR^2\to\BR^2$
diffeomorphism,
in the last four cases, we get local maps giving solutions in the selected range.

\begin{enumerate}
    \item Identity (case $A_5$ in \cite{RG})
   \[A(s,t)=\begin{pmatrix}
1 & 0 & s \\
0 & 1 & t \\
0 & 0 & 1 \\
\end{pmatrix}\]
Here $\phi$ is the identity map, it won't define the translates of a single curve.

    \item Parabola (case $A_4$ in \cite{RG})
   \[A(s,t)=\begin{pmatrix}
1 & 0 & s \\
s & 1 & t+\frac{s^2}{2} \\
0 & 0 & 1 \\
\end{pmatrix}\]
Here $\phi:(x,y)\mapsto\left(x,y+\frac{x^2}{2}\right).$ Every line $(x,ax+b)$ maps to a translate of a parabola $y=\frac{x^2}{2}.$ Its image is $\left(x,\frac{(x+a)^2}{2}+\frac{a^2}{2}+b\right).$ This is Pudl\'ak's map in \cite{PP} we mentioned in the introduction.

\medskip

    \item Log curve (case $A_6$ in \cite{RG})
   \[A(s,t)=\begin{pmatrix}
e^s & 0 & e^s-1 \\
0 & 1 & t \\
0 & 0 & 1 \\
\end{pmatrix}\]
Here $\phi:(x,y)\mapsto(\ln(x+1),y).$ 
It is more convenient to work with the map $\phi':(x,y)\mapsto(x,\ln(y)),$ giving the same family of curves.
For every line $y=ax+b$ with $a>0,$ the part $x>\frac{b}{a}$ maps to a translate of a log curve, $y=\ln(x).$ Its image is $\left(x,\ln\left(x+\frac{b}{a}\right)+\ln({a})\right).$

    \item (case $A_1$ in \cite{RG})
   \[A(s,t)=\begin{pmatrix}
e^s & 0 & e^s-1 \\
e^s(e^t-1) & e^se^t & e^s(e^t-1) \\
0 & 0 & 1 \\
\end{pmatrix}\]
Here $\phi:(x,y)\mapsto\left(\ln(x+1),\ln\left(1+\frac{y}{x+1}\right)\right).$ 

    The image of the part $x>\max\left(-\frac{b+1}{a+1},-1\right)$ of the $(x,ax+b)$ line is a curve. However, different lines map into different types of curves. Lines of the form $y=c(x+1)$ map to a horizontal line $y=\ln(1+c).$ This is still an interesting map. Let us consider a line $y=ax+b$ where $a\neq -1$ and $\frac{b-a}{1+a}>0.$ If we set $\ln(x+1)=X,$ then the image curve is 
    
    \[
    \left(X,\ln\left(1+a+(b-a)e^{-X}\right)\right)= \left(X,\ln\left(1+e^{-\left(X-\ln\left(\frac{b-a}{1+a}\right)\right)}\right)+\ln(1+a)\right),
    \]
which is a translate of the curve $y=\ln(1+e^{-x})$.

\item (case $A_2$ in \cite{RG})
   \[A(s,t)=\begin{pmatrix}
e^s & 0 & e^s-1 \\
e^st & e^s & e^st \\
0 & 0 & 1 \\
\end{pmatrix}\]
Here $\phi:(x,y)\mapsto\left(\ln(x+1),\frac{y}{x+1}\right).$ 

   As in the previous case, lines of the form $y=c(x+1)$ map to a horizontal line. But if we consider lines $ax+b,$ where $b>a\neq 0,$ and set $\ln(x+1)=X,$ then the image curve is 
    
    \[
    \left(X,a+\frac{b-a}{e^X}\right)= \left(X, e^{-(X-\ln(b-a))}+a\right),
    \]
which is a translate of the curve $y=e^{-x}$. 

\item Rotations (case $A_3$ in \cite{RG})
   \[A(s,t)=\begin{pmatrix}
e^s\cos t & -e^s\sin t & 1-e^s\cos t \\
e^s\sin t & e^s\cos t & e^s\sin t \\
0 & 0 & 1 \\
\end{pmatrix}\]
%Like in the previous case, images of different lines could be the same. 
Images of translations with $(s,t+2k\pi)$ would give the same line for any integer $k.$ Also, lines 
of the form $y=c(x+1)$ map to a horizontal line. 

In this last case, the map is given by 
\[\phi:(x,y)\mapsto\left(\frac{\ln((1-x)^2+y^2)}{2},\arctan\left(\frac{y}{1-x}\right)\right),\]

or, equivalently by

\[
\phi:(x,y)\mapsto\left(\Re(\ln(1-x+iy)),\Im(\ln(1-x+iy))\right).
\]

%This curve (fig. \ref{last}), is defined by the real and imaginary parts of a complex logarithm. 
If a line is given by the $x=t, y= at+b$ equations

then its image after the map is 
\[
x(t)=\Re\left(\ln\left(t-\frac{1+ib}{1-ia}\right)+\ln(ia-1)\right), \]
\[y(t)=\Im\left(\ln\left(t-\frac{1+ib}{1-ia}\right)+\ln(ia-1)\right),
\]

which is the real and the imaginary part of a translate of the complex logarithm function.

\begin{figure}[h]\label{last}
  \begin{center}
   \includegraphics[scale=0.3]{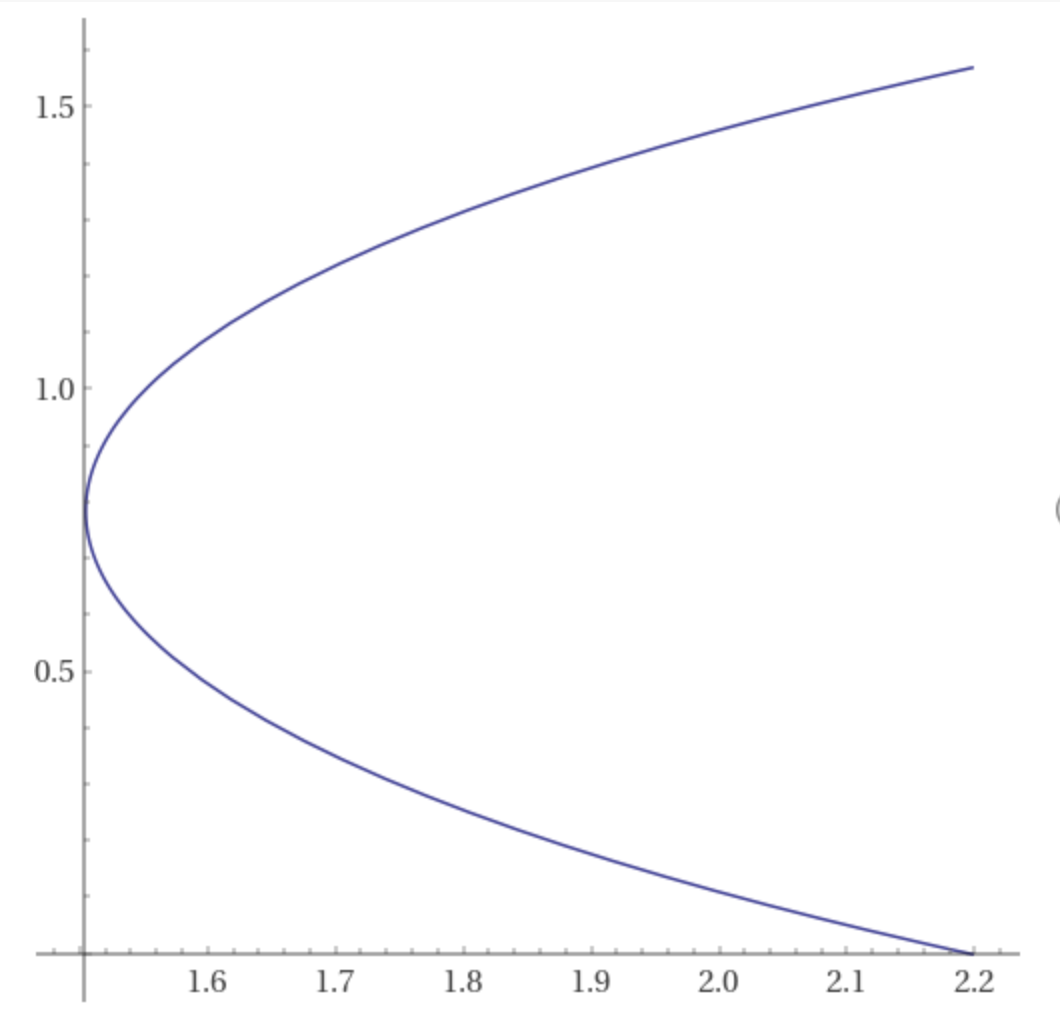}
  \caption{Part of the curve from Case 6}
  \end{center}
\end{figure}

From the six affine actions, we found four different curves such that arrangements of their translates are (locally) combinatorically equivalent to arrangements of lines. 
\end{enumerate}

\section{Concluding remarks and open problems}

\begin{itemize}
    
    \item One can ask the same problem in higher dimensions. What are the surfaces such that any finite arrangement of hyperplanes is combinatorically equivalent to translates of the surface?
    Remm and Goze classified the three-dimensional commutative, associative real algebras in \cite{RG}, so based on their work, one can characterize such surfaces in three-space. One of these (an extension of Pudl\'ak's map \cite{PP}) was used by Zahl in \cite{Z} to construct a norm determining $\Omega(n^{3/2})$ unit distances among $n$ points. In the same paper, Zahl was able to break the $n^{3/2}$ barrier, showing that for the Euclidean norm, the number of unit distances determined by $n$ points is $O(n^{3/2-c})$ for some $c>0.$ In dimension four and higher there are $n$-element pointsets with $\Omega(n^2)$ unit distances (see more about such constructions in \cite{Konrad}).
    
    \item It follows from Sz\'ekely's proof of the Szemer\'edi-Trotter theorem in \cite{Szek} that $m$ translates of  a convex curve and $n$ points determine $O(n^{2/3}m^{2/3}+n+m)$ incidences (see also in \cite{ENR}). In this paper, we listed four curves where the above incidence bound is sharp, i.e. for each curve, there are arrangements of $m$ translates of the curve and $n$ points  with $\Omega(n^{2/3}m^{2/3}+n+m)$ incidences for arbitrarily large $n$ and $m$. Are there such planar curves significantly different from the ones listed above? 
    
\end{itemize}

\medskip

\section{Acknowledgements}
JS's research was partly supported by a Hungarian National Research Grant KKP grant no.133819, an NSERC Discovery grant and OTKA K  grant no.119528.
ESz was supported by the National Research, Development and Innovation
Office (NKFIH) Grant K138596. The project leading to this application has received funding from the European Research Council (ERC) under the European Union's Horizon 2020 research and innovation programme (grant agreement No 741420)

The comments and suggestions of the referee greatly improved the presentation of the results. We are thankful for that.


\begin{thebibliography}{10}

\bibitem{A} N. Alon, M. Buci\'c, and L. Sauermann, Unit and distinct distances in typical norms (2023) 
arXiv:2302.09058 [math.CO]

\bibitem{Pach} P. Brass, W.O.J. Moser, and J. Pach, Research Problems in Discrete Geometry,
Springer (2005) 1st. edition, XII, 500 pp.

\bibitem{ENR} Gy. Elekes, M. B Nathanson, and I. Z Ruzsa,
Convexity and Sumsets,
Journal of Number Theory,
Volume 83, Issue 2,
(2000) 194--201,

\bibitem{El} Gy. Elekes, On linear combinatorics I. concurrency -- an algebraic approach. 
Combinatorica, (2011) 17(4):447--458, 

\bibitem{ER} P. Erd\H os, On sets of distances of n points.
American Mathematical Monthly (1946) 53: 248--250.

\bibitem{Ja}
V.V. Jarn\'ik, \"Uber die Gitterpunkte auf konvexen Kurven. Mathematische Zeitschrift 24 (1926): 500--518. 

\bibitem{Konrad}
K.J. Swanepoel, Unit Distances and Diameters in Euclidean Spaces. Discrete Comput Geom (2009) 41, 1--27.

\bibitem{Ma} J. Matou\v{s}ek,  
The number of unit distances is almost linear for most norms. Advances in Mathematics. (2011) 226. 2618--2628. 

\bibitem{PA} J. Pach and P.K. Agarwal, Combinatorial geometry.
Wiley-Interscience Series in Discrete Mathematics and Optimization.
A Wiley-Interscience Publication. John Wiley \& Sons, Inc., New York,
1995. xiv+354 pp. ISBN: 0-471-58890-3

\bibitem{PP} P. Pudl\'ak, On explicit Ramsey graphs and estimates of the number
of sums and products. in: Topics in Discrete Mathematics,
eds. Klazar, Kratochvil, Loebl, Matousek, Thomas and Valtr.
Springer 2006, 169--175.

\bibitem{RG} E. Remm and M. Goze,
Affine structures on abelian Lie groups,
Linear Algebra and its Applications,
Volume 360,
(2003) 215--230.

\bibitem{GS} L. Guth and O. Silier, Sharp Szemer\'edi-Trotter Constructions in the Plane,
arXiv:2112.00306 [math.CO]

\bibitem{SZT} E. Szemer\'edi and W. Trotter, Extremal problems in discrete geometry.
Combinatorica 3 (1983) 381--392.

\bibitem{SST} J. Spencer, E. Szemer\'edi, and W. Trotter,
Unit distances in the Euclidean plane.  Graph theory and combinatorics
(Cambridge, 1983),   Academic Press, London, (1984) 293--303.

\bibitem{ShSi} A. Sheffer and O. Silier, A structural Szemer\'edi-Trotter Theorem for Cartesian Products,
	arXiv:2110.09692 [math.CO]
	
\bibitem{Szek} L. Sz\'ekely, Crossing Numbers and Hard Erd\H os Problems in Discrete Geometry. Combinatorics, Probability and Computing, (1997) 6(3), 353--358.

\bibitem{VA} P.~Valtr, Strictly convex norms allowing many unit distances
and related touching questions, manuscript.

\bibitem{Z} J. Zahl, Breaking the 3/2 barrier for unit distances in three dimensions,
International Mathematics Research Notices, Volume 2019, Issue 20, (2019)  6235--6284.

\end{thebibliography}
\end{document}